\documentclass[12pt,reqno]{amsart}
\usepackage{amsmath, amsfonts, amssymb, amsthm}
\textwidth=420pt \evensidemargin=0pt \oddsidemargin=0pt

\def\Z{\mathbb Z}

\def\N{\mathbb N}

\def\1{{\bf 1}}

\theoremstyle{plain}
\newtheorem{theorem}{Theorem}[section]

\theoremstyle{definition}

\theoremstyle{remark}

\begin{document}
\title{A Stern-type congruence for the Schr\"oder numbers}

\begin{abstract}
For the Schr\"oder number
$$
S_n=\sum_{k=0}^n\binom{n}k\binom{n+k}k\frac1{k+1},
$$
we prove that
$$
S_{n+2^\alpha}\equiv S_{n}+2^{\alpha+1}\pmod{2^{\alpha+2}},
$$
where $n\geq 1$ and $\alpha\geq 1$.
\end{abstract}
\author{Hui-Qin Cao} \author{Hao Pan}
\address{Department of Applied Mathematics, Nanjing Audit University, Nanjing 210029, People's Public of China}
\email{caohq@nau.edu.cn}
\address{Department of Mathematics, Nanjing University, Nanjing 210093,
People's Republic of China}
\email{haopan79@zoho.com}
\keywords{Congruence; Schr\"oder number}

\subjclass[2010]{Primary 11B65; Secondary 05A10, 11A07}
\maketitle

In combinatorics, the Schr\"oder number $S_n$ is the number of all lattice paths from $(0,0)$ and $(n,n)$
which just use the steps $(1,0)$, $(0,1)$, $(1,1)$ and contain no point above the line $y=x$.
Using a simple combinatorial discussion, we can easily prove that
$$
S_n=\sum_{k=0}^n\binom{n+k}{2k}\binom{2k}{k}\frac1{k+1}.
$$
In fact, assuming a Schr\"oder path exactly uses $n-k$ steps $(1,1)$, there are $\binom{n+k}{n-k}$ ways to insert those
$n-k$ steps $(1,1)$ between $k$ steps $(1,0)$ and $k$ steps $(0,1)$. Also, it is well-known that
the number of the lattice paths from $(0,0)$ and $(k,k)$,
which use the steps $(1,0)$, $(0,1)$ and never rise above the line $y=x$, is the Catalan number $C_k=\frac1{k+1}\binom{2k}{k}$.
Furthermore, we know that the generating function for the Schr\"oder numbers is
$$
\sum_{n=0}^\infty S_nx^n=\frac{1-x-\sqrt{1-6x+x^2}}{2x}.
$$
Nowadays, the combinatorial properties of the Schr\"oder numbers have been widely discussed.
For example, in \cite[Exercise 6.39]{Stanely99}, Stanley listed many different combinatorial interpretations for $S_n$.

However, seemingly the arithmetical properties of $S_n$ were seldom studied before.
In fact, we know that $S_n$ is always even for $n\geq 1$. The number
$$
s_n=\frac{1}{2}S_n
$$
is also called the little Schr\"oder number, which counts the number of ways of dissecting a convex polygon with $n+1$ sides into smaller polygons by inserting diagonals.
And in \cite{Sun11}, Sun obtained several interesting congruences on the sum $\sum_{k=0}^{p-1}S_km^{-k}$ modulo odd prime $p$.

On the other hand, the Euler number $E_n$ is given by
$$
\sum_{n=0}^\infty\frac{E_n}{n!}x^n=\frac{2}{e^x+e^{-x}}.
$$
Clearly $E_{n}=0$ if $n$ is odd.
The Euler numbers play an important role in number theory, since $E_{2n}/2$ coincides with the Dirichlet $L$-function $L(-2n,\chi_{-4})$.
As early as 1851, Kummer had proved
\begin{equation}\label{kummereuler}
E_{2n+(p-1)}\equiv E_{2n}\pmod{p}
\end{equation}
for any odd prime $p$.
Furthermore, in combinatorics, $(-1)^nE_{2n}$ is the number of all alternative permutations on $\{1,2,\ldots,2n\}$, i.e.,
$$
(-1)^nE_{2n}=\#\{\sigma:\,\sigma(1)<\sigma(2)>\sigma(3)<\cdots>\sigma(2n-1)<\sigma(2n)\}.
$$
In 1875, Stern proved another curious identity for the Euler numbers:
\begin{equation}\label{sterneuler}
E_{2n+2^\alpha}\equiv E_{2n}+2^\alpha\pmod{2^{\alpha+1}},
\end{equation}
where $\alpha\geq 1$.
Clearly Stern's congruence implies that $E_{2n}\equiv E_{2m}\pmod{2^{\alpha}}$ if and only if $2^\alpha$ divides $2n-2m$.
For the history and a proof of Stern's congruence, the reader may refer to Wagstaff's expository article \cite{Wagstaff02}.

Obviously from the viewpoint of either combinatorics or number theory, the Schr\"oder numbers have no connection with the Euler numbers. However, in this short note, we shall show that $S_n$ also satisfies a very similar congruence as (\ref{sterneuler}).
\begin{theorem}
For any $n\geq 1$ and $\alpha\geq 1$,
\begin{equation}\label{sternschroder}
S_{n+2^\alpha}\equiv S_{n}+2^{\alpha+1}\pmod{2^{\alpha+2}}.
\end{equation}
\end{theorem}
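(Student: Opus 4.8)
The plan is to reduce (\ref{sternschroder}) to a single statement about $2$-adic valuations and then isolate one dominant term. Throughout let $v_2$ denote the $2$-adic valuation. First observe that (\ref{sternschroder}) is \emph{equivalent} to $v_2(S_{n+2^\alpha}-S_n)=\alpha+1$: if the valuation is exactly $\alpha+1$ then $S_{n+2^\alpha}-S_n=2^{\alpha+1}u$ with $u$ odd, and $u\equiv1\pmod2$ forces $S_{n+2^\alpha}-S_n\equiv2^{\alpha+1}\pmod{2^{\alpha+2}}$. I would then expand the difference by Newton's forward-difference formula. Writing $\Delta S_n=S_{n+1}-S_n$, the identity $E^{2^\alpha}=(1+\Delta)^{2^\alpha}$ for the shift operator $E$ gives
$$
S_{n+2^\alpha}-S_n=\sum_{j=1}^{2^\alpha}\binom{2^\alpha}{j}\Delta^j S_n .
$$
Since $\binom{2^\alpha}{j}=\tfrac{2^\alpha}{j}\binom{2^\alpha-1}{j-1}$ and $\binom{2^\alpha-1}{j-1}$ is odd by Lucas' theorem (as $2^\alpha-1$ is a string of $1$'s in base $2$), one has $v_2\binom{2^\alpha}{j}=\alpha-v_2(j)$ for $1\le j\le2^\alpha$.

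The main tool for handling the differences $\Delta^jS_n$ is a quadratic functional equation. Rationalizing the square root in the generating function $f(x)=\sum_nS_nx^n$ yields $xf(x)^2-(1-x)f(x)+1=0$, and comparing coefficients gives the convolution recurrence
$$
\Delta S_m=S_{m+1}-S_m=\sum_{i=0}^mS_iS_{m-i}\qquad(m\ge0).
$$
Separating the extreme terms $i=0,m$ and using that $S_i$ is even for $i\ge1$ shows $\Delta S_m\equiv0\pmod4$ for $m\ge1$; telescoping then gives $S_m\equiv2\pmod4$ for all $m\ge1$. Substituting this back into the recurrence improves the estimate to $\Delta S_m\equiv4m\pmod8$, and hence
$$
\Delta^2 S_n=\Delta S_{n+1}-\Delta S_n\equiv4\pmod8\qquad(n\ge1).
$$
Consequently the $j=2$ term of the expansion, namely $\binom{2^\alpha}{2}\Delta^2S_n=2^{\alpha-1}(2^\alpha-1)\Delta^2S_n$, is congruent to $2^{\alpha+1}\cdot(\mathrm{odd})\equiv2^{\alpha+1}\pmod{2^{\alpha+2}}$: this is precisely the term producing the $+2^{\alpha+1}$.

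It remains to kill every term with $j\ne2$. Because $v_2\binom{2^\alpha}{j}=\alpha-v_2(j)$, this amounts to the key estimate
$$
v_2(\Delta^jS_n)\ge v_2(j)+2\qquad(n\ge1,\ j\ne2),
$$
the case $j=1$ being the bound $v_2(\Delta S_n)\ge2$ already in hand. Granting it, each term with $j\ne2$ has valuation at least $\alpha+2$, so only the $j=2$ term survives modulo $2^{\alpha+2}$ and the theorem follows. I expect this estimate to be the hard part. Term-by-term bounds are hopeless: iterating $\Delta_n\binom{n+k}{2k}=\binom{n+k}{2k-1}$ gives the closed form $\Delta^jS_n=\sum_k\binom{n+k}{2k-j}C_k$, and (recalling $v_2(C_k)=s_2(k+1)-1$, with $s_2$ the binary digit sum) this sum contains summands not divisible by $2^{v_2(j)+2}$, so real cancellation among the terms must be exploited. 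My plan is to control $S_m$ modulo every power of $2$ by Hensel-type lifting along the functional equation: if $f\equiv F\pmod{2^t}$ solves $xF^2-(1-x)F+1\equiv0\pmod{2^t}$, then writing $f=F+2^tg$ reduces the next level to the \emph{linear} congruence $(1-x)g\equiv 2^{-t}\bigl(xF^2-(1-x)F+1\bigr)\pmod2$, which is solvable because $1-x$ is invertible as a power series. This produces an explicit description of $f$ modulo $2^t$ as a rational function with denominator a power of $1-x$ (for instance $f\equiv\frac{1+x}{1-x}+\frac{4x^2}{(1-x)^3}\pmod8$); reduced modulo $2^t$ its coefficients $S_m$ are periodic in $m$, and computing the forward differences of these periodic patterns yields the required divisibility $v_2(\Delta^jS_n)\ge v_2(j)+2$.
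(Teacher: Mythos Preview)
Your approach is quite different from the paper's. The paper works with the Narayana-type expansion $S_n=\sum_{k\ge1}\frac{1}{n}\binom{n}{k}\binom{n}{k-1}2^k$ and compares each summand at $n$ and at $n+2^\alpha$ directly via Vandermonde and $v_2\binom{2^\alpha}{j}=\alpha-v_2(j)$; the summands with $k\le4$ are handled by explicit computation (the $k=2$ one produces the $2^{\alpha+1}$), and those with $k\ge5$ are shown to match modulo $2^{\alpha+3}$. Your route via forward differences and the convolution recurrence $\Delta S_m=\sum_{i=0}^m S_iS_{m-i}$ is a genuinely different organization, and everything through the identification of the $j=2$ term (using $\Delta S_m\equiv0\pmod4$, $S_m\equiv2\pmod4$, $\Delta S_m\equiv4m\pmod8$, hence $\Delta^2S_n\equiv4\pmod8$) is correct and rather clean.

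The gap is in your ``key estimate'' $v_2(\Delta^j S_n)\ge v_2(j)+2$ for $j\neq2$, which you only sketch. The cases with $v_2(j)\le1$ do follow from what you have already proved: $v_2(\Delta S_n)\ge2$ covers all odd $j$, and $\Delta^2S_n\equiv4\pmod8$ gives $v_2(\Delta^3S_n)\ge3$, hence $v_2(\Delta^jS_n)\ge3$ for all $j\ge3$, covering $j\equiv2\pmod4$. What remains are the $j$ with $4\mid j$, and for these the critical instances are $j=2^k$, $k\ge2$, where you need $v_2(\Delta^{2^k}S_n)\ge k+2$. Your Hensel plan, as stated, does not deliver this: at each lift one squares $F_t$ and divides once by $1-x$, so the denominator exponent obeys $d(t+1)\le 2d(t)+1$, giving $d(t)\le 2^{t-1}-1$ and hence only $v_2(\Delta^jS_n)\ge1+\lfloor\log_2 j\rfloor$. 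For $j=2^k$ this is $k+1$, one short of $k+2$. Concretely, you need $\Delta^4S_n\equiv0\pmod{16}$, but the bound $d(4)\le7$ says nothing about $\Delta^4$. The estimate is in fact true (extending your computation one step, $\Delta^3S_n\equiv8\pmod{16}$ and so $\Delta^4S_n\equiv0\pmod{16}$), but proving it for all $k\ge2$ requires either a sharper control of the rational approximants than the naive recursion gives, or a separate induction you have not supplied.
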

Clearly (\ref{sternschroder}) is equivalent to
\begin{equation}\label{sternlittleschroder}
s_{n+2^\alpha}\equiv s_{n}+2^{\alpha}\pmod{2^{\alpha+1}},
\end{equation}
i.e., $s_{n}\equiv s_{m}\pmod{2^{\alpha}}$ if and only if $2^\alpha$ divides $n-m$.
In fact, we can prove a stronger result.
\begin{theorem}
For any $n\geq 1$ and $\alpha\geq 2$,
\begin{equation}\label{sternschroder2}
S_{n+2^\alpha}\equiv S_{n}+(-1)^{\lfloor\frac{n-1}{2}\rfloor}2^{\alpha+1}\pmod{2^{\alpha+3}},
\end{equation}
where $\lfloor x\rfloor=\max\{n\in\Z:\,n\leq x\}$. And when $\alpha=1$, we also have
\begin{equation}\label{sternschroder21}
S_{n+2}\equiv S_{n}+4\pmod{16}.
\end{equation}
\end{theorem}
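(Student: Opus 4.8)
The plan is to replace the shift by $2^\alpha$ with a finite-difference expansion. Writing $\Delta^iS_n=\sum_{s=0}^i(-1)^{i-s}\binom{i}{s}S_{n+s}$ for the $i$-th forward difference, Newton's forward-difference formula gives the exact identity
$$S_{n+2^\alpha}-S_n=\sum_{i=1}^{2^\alpha}\binom{2^\alpha}{i}\Delta^iS_n.$$
Because $\binom{2^\alpha}{i}=\tfrac{2^\alpha}{i}\binom{2^\alpha-1}{i-1}$ and $\binom{2^\alpha-1}{i-1}$ is odd, one has $v_2\!\binom{2^\alpha}{i}=\alpha-v_2(i)$, so the $i$-th summand has $2$-adic valuation $\alpha-v_2(i)+v_2(\Delta^iS_n)$. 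The whole problem is thereby reduced to the $2$-adic behaviour of the differences $\Delta^iS_n$, for which I would use the auxiliary formula $\Delta^iS_n=\sum_kC_k\binom{n+k}{2k-i}$, obtained by extracting the coefficient of $x^n$ in $\bigl(\tfrac{1-x}{x}\bigr)^iF(x)$ with $F(x)=\sum_mS_mx^m$ the Schr\"oder generating function, together with $v_2(C_k)=s_2(k)-v_2(k+1)$, where $s_2(k)$ is the number of ones in the binary expansion of $k$; note that $v_2(C_k)=0$ exactly when $k=2^t-1$.

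The crux is a Key Lemma isolating which summands survive modulo $2^{\alpha+3}$. I expect to prove that, for $n\geq1$, $\Delta^iS_n\equiv0\pmod{2^{v_2(i)+3}}$ for every $i\geq3$ with $i\neq4$, and also $v_2(\Delta^1S_n)\geq3$ when $n$ is even; all such summands then vanish modulo $2^{\alpha+3}$. Only $i\in\{1,2,4\}$ can contribute, and for these I would compute the exact low residues from the double sum: $\Delta^2S_n\equiv12$ or $4\pmod{16}$ as $n$ is odd or even; $\Delta^1S_n\equiv4\pmod8$ for $n$ odd; and $\Delta^4S_n\equiv16\pmod{32}$ when $n\equiv1,2\pmod4$, while $\Delta^4S_n\equiv0\pmod{32}$ when $n\equiv0,3\pmod4$. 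Establishing these congruences---above all the uniform lower bound for the infinite tail $i\geq5$---is the main obstacle. I would attack it by induction on $i$ through $\Delta^{i+1}S_n=\Delta^iS_{n+1}-\Delta^iS_n$, showing simultaneously that each $\Delta^iS_n$, reduced modulo the relevant power of $2$, is a function of $n$ that is periodic with period a power of $2$; the periodicity is forced by Lucas' theorem applied to $\binom{n+k}{2k-i}$, and the explicit Catalan valuations locate the few terms $k=2^t-1$ that control the leading residues.

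Granting the Key Lemma, the assembly is a short computation modulo $4$. Define $c_n$ by $S_{n+2^\alpha}-S_n\equiv2^{\alpha+1}c_n\pmod{2^{\alpha+3}}$. For $\alpha\geq2$ the term $i=2$ equals $2^{\alpha-1}(2^\alpha-1)\Delta^2S_n=2^{\alpha+1}(2^\alpha-1)w_n$ with $w_n=\Delta^2S_n/4$ odd; since $2^\alpha\equiv0\pmod4$ it contributes $-w_n$ to $c_n$, namely $1$ for $n$ odd and $3$ for $n$ even. The term $i=1$ contributes $2$ for $n$ odd and $0$ for $n$ even, and the term $i=4$ contributes $2$ for $n\equiv1,2\pmod4$ and $0$ for $n\equiv0,3\pmod4$. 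Adding these gives $c_n\equiv1\pmod4$ for $n\equiv1,2$ and $c_n\equiv3\equiv-1\pmod4$ for $n\equiv3,0$, which is exactly $(-1)^{\lfloor(n-1)/2\rfloor}$, proving (\ref{sternschroder2}). The case $\alpha=1$ must be handled separately, because then $i=4>2^\alpha$ drops out: here $S_{n+2}-S_n=2\Delta S_n+\Delta^2S_n$, and the residues above give $8+12\equiv4$ for $n$ odd and $0+4\equiv4$ for $n$ even, both modulo $16$, which is the sign-free congruence (\ref{sternschroder21}).
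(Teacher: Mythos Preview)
Your framework is sound and genuinely different from the paper's. The paper never touches forward differences; instead it writes
\[
S_n=\sum_{k=1}^n T(n,k),\qquad T(n,k)=\frac{2^k}{n}\binom{n}{k}\binom{n}{k-1},
\]
so that each summand carries an explicit factor $2^k$. With this decomposition the tail is free: $T(n,k)\equiv 0\pmod{2^{\alpha+3}}$ once $k>2^\alpha$, and for $5\le k\le 2^\alpha$ one shows directly that $T(n+2^\alpha,k)\equiv T(n,k)\pmod{2^{\alpha+k-\lfloor\log_2 k\rfloor}}$ with $k-\lfloor\log_2 k\rfloor\ge 3$. Thus only $k=1,2,3,4$ require explicit evaluation, and those four computations produce the $(-1)^{\lfloor (n-1)/2\rfloor}2^{\alpha+1}$ in one stroke. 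Your low-$i$ residues for $\Delta S_n,\Delta^2 S_n,\Delta^4 S_n$ and your final assembly into $c_n$ are correct and match this outcome.

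The gap is the Key Lemma itself. Everything in your argument hinges on
\[
v_2\bigl(\Delta^i S_n\bigr)\ \ge\ v_2(i)+3\qquad(i\ge 3,\ i\ne 4,\ n\ge 1),
\]
and you only sketch a strategy. The proposed induction via $\Delta^{i+1}S_n=\Delta^iS_{n+1}-\Delta^iS_n$ does not close on its own: at the step $i=2^{j}-1\to i=2^{j}$ the required valuation jumps from $3$ to $j+3$, whereas a difference of two integers divisible by $2^3$ is a priori only divisible by $2^3$. To salvage the induction you would have to carry, for each $i$, the \emph{exact} residue of $\Delta^i S_n$ modulo $2^{j+3}$ together with its period in $n$, and then verify that these residues cancel in the next difference; but you do not specify the modulus or the period to be tracked, nor bound how they grow with $i$. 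The alternative route you mention, working directly with $\Delta^i S_n=\sum_k C_k\binom{n+k}{2k-i}$ and the Catalan valuations $v_2(C_k)=s_2(k+1)-1$, is also left at the level of an intention; the odd-Catalan indices $k=2^t-1$ enter the sum from $k\ge\lceil i/2\rceil$ on and it is not explained how the binomial factors supply the missing $v_2(i)$. In short, the reduction to the Key Lemma is clean, but the Key Lemma is the whole difficulty and it is not proved here; the paper's $2^k$-weighted decomposition sidesteps exactly this obstacle.
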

\begin{proof}
We need another explicit form of $S_n$:
\begin{equation}\label{schrodertwo}
S_n=\sum_{k=1}^n\frac1n\binom{n}{k}\binom{n}{k-1}2^k.
\end{equation}
Of course, (\ref{schrodertwo}) is known. However, for the sake of the completeness, here we give a proof:
\begin{align*}
&\sum_{k=1}^n\frac1n\binom{n}{k}\binom{n}{k-1}2^k=
\sum_{k=1}^n\frac1n\binom{n}{k}\binom{n}{k-1}\sum_{j=0}^k\binom{k}{j}\\
=&
\sum_{j=0}^n\frac1n\binom{n}{j}\sum_{k=j}^n\binom{n-j}{k-j}\binom{n}{k-1}
=\sum_{j=0}^n\frac1n\binom{n}{j}\binom{2n-j}{n-1}\\
=&
\sum_{k=0}^n\frac1n\binom{n}{k}\binom{n+k}{n-1}=
\sum_{k=0}^n\frac1{k+1}\binom{n+k}{2k}\binom{2k}{k},
\end{align*}
where the Chu-Vandemonde identity is used in the third step.

Denote $$T(n,k)=\frac{1}{n}\binom{n}{k}\binom{n}{k-1}2^k.$$
In fact, $N(n,k)=T(n,k)/2^k$ is also called the Narayana number, which counts the number of the Calatan paths with $k$ peaks from $(0,0)$ to $(n,n)$. 
Then $$S_n=\sum_{k=1}^nT(n,k).$$ It is clear that $$T(n, k)\equiv 0\pmod{2^k},$$
since the Narayana number
$$
\frac{1}{n}\binom{n}{k}\binom{n}{k-1}=\binom{n+1}{k}\binom{n-1}{k-1}-\binom{n}k\binom{n}{k-1}
$$
is an integer.

For $\alpha\geq 1$, 
clearly
\begin{align}
T(n+2^{\alpha}, 1)=\frac{1}{n+2^{\alpha}}\binom{n+2^{\alpha}}{1}\binom{n+2^{\alpha}}{0}\cdot 2
=\frac{1}{n}\binom{n}{1}\binom{n}{0}\cdot 2=T(n, 1)\label{T1},
\end{align}
and
\begin{align}
T(n+2^{\alpha}, 2)=&\frac{1}{n+2^{\alpha}}\binom{n+2^{\alpha}}{2}\binom{n+2^{\alpha}}{1}\cdot 2^2\notag\\
=&2(n(n-1)+2^{\alpha+1}n-2^{\alpha}+2^{2\alpha})\notag\\
=&T(n, 2)+2^{\alpha+2}n-2^{\alpha+1}+2^{2\alpha+1}.\label{T2}
\end{align}
Moreover, noting that
\begin{equation}\label{binomial2alpha}
\binom{n+2^{\alpha}}{2}=\binom{n-1}{2}+\binom{n-1}{1}\binom{2^\alpha}{1}+\binom{2^\alpha}{2}
\equiv\binom{n-1}{2}+2^{\alpha-1}\pmod{2^{\alpha}},
\end{equation}
we get
\begin{align}
T(n+2^{\alpha},3)
=&\frac{1}{3}\binom{n+2^{\alpha}-1}{2}\binom{n+2^{\alpha}}{2}\cdot 2^3\notag\\
\equiv&\frac{2^3}{3}\bigg(\binom{n-1}{2}+2^{\alpha-1}\bigg)\bigg(\binom{n}{2}+2^{\alpha-1}\bigg)\notag\\
=&\frac{2^3}{3}\bigg(\binom{n-1}{2}\binom{n}{2}+(n-1)^22^{\alpha-1}+2^{2\alpha-2}\bigg)\notag\\
\equiv&T(n, 3)+(n-1)2^{\alpha+2}+2^{2\alpha+1}\pmod{2^{\alpha+3}}\label{T3}.
\end{align}
Thus, when $\alpha=1$, we have 
\begin{align*}
S_{n+2}=&\sum_{k=1}^{n+2}T(n+2, k)\equiv\sum_{k=1}^3T(n+2, k)\pmod{2^4}\\
\equiv&T(n, 1)+\left(T(n, 2)+n2^3-2^2+2^3\right)+\left(T(n, 3)+(n-1)2^3+2^3\right)\\
\equiv&\sum_{k=1}^3T(n, k)+(2n-1)2^3-2^2\equiv S_n+2^2\pmod{2^4}.
\end{align*}

Below assume that $\alpha\geq 2$. Since $2^{\alpha}\geq\alpha+2$, we get $$T(n, k)\equiv T(n+2^{\alpha}, k)\equiv 0\pmod{2^{\alpha+3}}$$ for $k>2^{\alpha}$. So
\begin{align*}
S_{n+2^{\alpha}}=\sum_{k=1}^{n+2^{\alpha}}T(n+2^{\alpha}, k)\equiv\sum_{k=1}^{2^{\alpha}}T(n+2^{\alpha}, k)\pmod{2^{\alpha+3}}.
\end{align*}
By the Chu-Vandemonde identity,  
$$
\binom{n+2^{\alpha}}
{k}=\sum_{j=0}^{k}\binom{n}{k-j}\binom{2^{\alpha}}{j}.
$$
Since $$\binom{2^{\alpha}}{j}=\frac{2^{\alpha}}{j}\binom{2^{\alpha}-1}{j-1}\equiv 0\pmod{2^{\alpha-\nu_2(j)}},$$ 
where $$\nu_2(j)=\max\{b\in\N:\,2^b\mid j\}.$$
Clearly $\nu_2(j)\leq\lfloor\log_2k\rfloor$ 
for any $1\leq j\leq k$. We obtain that
$$\binom{n+2^{\alpha}}{k}\equiv\binom{n}{k}\pmod{2^{\alpha-\lfloor\log_2k\rfloor}}.$$
If $k$ is odd, then
\begin{align*}
T(n+2^{\alpha}, k)=&\frac{1}{n+2^{\alpha}}\binom{n+2^{\alpha}}{k}\binom{n+2^{\alpha}}{k-1}\cdot 2^k\\
=&\frac{2^k}{k}\binom{n-1+2^{\alpha}}{k-1}\binom{n+2^{\alpha}}{k-1}\\
\equiv&\frac{2^k}{k}\binom{n-1}{k-1}\binom{n}{k-1}=T(n, k)\pmod{2^{\alpha+k-\lfloor\log_2k\rfloor}}.
\end{align*}
And if $k$ is even, then
\begin{align*}
T(n+2^{\alpha},k)=&\frac{1}{n+2^{\alpha}}\binom{n+2^{\alpha}}{k}\binom{n+2^{\alpha}}{k-1}\cdot 2^k\\
=&\frac{2^k}{k-1}\binom{n+2^{\alpha}}{k}\binom{n-1+2^{\alpha}}{k-2}\\
\equiv&\frac{2^k}{k-1}\binom{n}{k}\binom{n-1}{k-2}=T(n, k)\pmod{2^{\alpha+k-\lfloor\log_2k\rfloor}}.
\end{align*}
Thus $$T(n+2^{\alpha}, k)\equiv T(n, k)\pmod{2^{\alpha+k-\lfloor\log_2k\rfloor}}$$ for each $1\leq k\leq 2^{\alpha}$. It is easy to verify $k-\lfloor\log_2k\rfloor\geq 3$ when $k\geq 5$. Thus we have
\begin{align*}
S_{n+2^{\alpha}}\equiv&\sum_{k=1}^4T(n+2^{\alpha}, k)+\sum_{k=5}^{2^{\alpha}}T(n+2^{\alpha}, k)\\
\equiv&\sum_{k=1}^4 T(n+2^{\alpha}, k)+\sum_{k=5}^{2^{\alpha}}T(n, k)\pmod{2^{\alpha+3}}.
\end{align*}

Clearly $$
\binom{2^{\alpha}}{j}=\frac{2^{\alpha}}{j}\binom{2^{\alpha}-1}{j-1}\equiv 0\pmod{2^{\alpha-1}}$$ for $1\leq j\leq 3$. 
And
$$
\binom{2^{\alpha}}{4}=2^{\alpha-2}\binom{2^{\alpha}-1}{3}\equiv 2^{\alpha-2}\pmod{2^{\alpha-1}}.
$$
So
$$
\binom{n+2^{\alpha}}{4}=\sum_{j=0}^4\binom{n}{4-j}\binom{2^{\alpha}}{j}\equiv\binom{n}{4}+2^{\alpha-2}\pmod{2^{\alpha-1}}.
$$
Also, by (\ref{binomial2alpha}),
$$
\binom{n-1+2^{\alpha}}{2}\equiv\binom{n-1}{2}\pmod{2^{\alpha-1}}.
$$
It follows that
\begin{align}
T(n+2^{\alpha}, 4)=&\frac{1}{3}\binom{n+2^{\alpha}}{4}\binom{n-1+2^{\alpha}}{2}\cdot 2^4\notag\\
\equiv &\frac{2^4}{3}\left(\binom{n}{4}+2^{\alpha-2}\right)\binom{n-1}{2}\notag\\
\equiv &T(n, 4)+\binom{n-1}{2}2^{\alpha+2}\pmod{2^{\alpha+3}}\label{T4}.
\end{align}
Combining (\ref{T1}), (\ref{T2}), (\ref{T3}) and (\ref{T4}), we get
\begin{align*}
\sum_{k=1}^4T(n+2^{\alpha}, k)
\equiv &T(n,1)+\left(T(n, 2)+n\cdot 2^{\alpha+2}-2^{\alpha+1}+2^{2\alpha+1}\right)\\
&+\left(T(n, 3)+(n-1)2^{\alpha+2}+2^{2\alpha+1}\right)
+\left(T(n, 4)+\binom{n-1}{2}2^{\alpha+2}\right)\\
\equiv&\sum_{k=1}^4 T(n,k)+2^{\alpha+1}(n^2+n-1)\\
\equiv&\sum_{k=1}^4 T(n,k)+(-1)^{\lfloor\frac{n-1}{2}\rfloor}2^{\alpha+1}\pmod{2^{\alpha+3}}.
\end{align*}
Consequently,
\begin{align*}
S_{n+2^{\alpha}}\equiv&\sum_{k=1}^4T(n,k)+(-1)^{\lfloor\frac{n-1}{2}\rfloor}2^{\alpha+1}+\sum_{k=5}^{2^{\alpha}}T(n, k)\\
=&S_n+(-1)^{\lfloor\frac{n-1}{2}\rfloor}2^{\alpha+1}\pmod{2^{\alpha+3}}.
\end{align*}
\end{proof}

\end{document}